\newcounter{sarrow}
\newcommand\xrsquigarrow[1]{
\stepcounter{sarrow}
\mathrel{\begin{tikzpicture}[baseline= {( $ (current bounding box.south) + (0,-0.5ex) $ )}]
\node[inner sep=.5ex] (\thesarrow) {$\scriptstyle #1$};
\path[draw,<-,decorate,
  decoration={zigzag,amplitude=0.7pt,segment length=1.2mm,pre=lineto,pre length=4pt}] 
    (\thesarrow.south east) -- (\thesarrow.south west);
\end{tikzpicture}}
}
\newtheorem{theorem}{Theorem}
\theoremstyle{definition}
\theoremstyle{remark}
\newtheorem{remark}[theorem]{Remark}
\def\Z{\mathbb{Z}}
\newcommand{\HFred}{{{HF}}^{\mathrm{red}}}
\newcommand{\HFplus}{{{HF}}^+}
\theoremstyle{plain}
\begin{document}

\title{Mazur-type manifolds with $L$--space boundary}

\author{James Conway}

\author{B\"{u}lent Tosun}

\address{Department of Mathematics \\ University of California, Berkeley \\ Berkeley \\ California}

\email{conway@berkeley.edu}

\address{Department of Mathematics\\ University of Alabama\\Tuscaloosa\\Alabama}

\email{btosun@ua.edu}

\subjclass[2000]{57R17}

\begin{abstract}
In this note, we prove that if the boundary of a Mazur-type $4$--manifold is an irreducible Heegaard Floer homology $L$--space, then the manifold must be the $4$--ball, and the boundary must be the $3$--sphere. We use this to give a new proof of Gabai's Property R.
\end{abstract}

\maketitle

\section{Introduction}

A \emph{Mazur-type} manifold is a contractible $4$--manifold with a particular handle structure: namely, it consist of a single handle of each index $0$, $1$, and $2$, where the $2$--handle is attached along a knot $K$ that intersect the co-core of the $1$--handle algebraically once (this yields a trivial fundamental group). Let $M(n)$ denote such a manifold, where $n\in \mathbb{Z}$ denotes the framing of the knot along which the $2$--handle is attached. Our main result is that

\begin{theorem}\label{main}
If $M(n)$ is a Mazur-type manifold, and the boundary is an irreducible Heegaard Floer homology $L$--space, then $M(n)$ is diffeomorphic to $B^4$ and $\partial M(n)$ is diffeomorphic to $S^3$.
\end{theorem}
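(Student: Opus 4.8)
The plan is to reinterpret the hypotheses as a statement about a knot in the boundary and then run a Heegaard Floer genus computation. Write $Y := \partial M(n)$. Since $M(n)$ is contractible, $Y$ is an integral homology sphere and $M(n)$ is a homology four-ball; hence the correction term satisfies $d(Y)=0$, and because $Y$ is also an $L$--space, $\HFplus(Y)$ agrees with $\HFplus(S^3)$ as a graded group (a single tower based in degree $0$, with $\HFred(Y)=0$). Next I would turn the handle decomposition upside down: read from the boundary, $M(n)$ is built from $Y\times[0,1]$ by attaching one $2$--handle (dual to the original $2$--handle, along the belt circle $J\subset Y$), one $3$--handle, and one $4$--handle. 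A $3$--handle together with a $4$--handle caps off precisely an $S^1\times S^2$ boundary (this is the upside-down decomposition of $S^1\times D^3$), so the manifold obtained after the $2$--handle must have boundary $S^1\times S^2$. As $Y$ is a homology sphere and $S^1\times S^2$ has $b_1=1$, the surgery coefficient is forced to be $0$; thus there is a knot $J\subset Y$ with $Y_0(J)\cong S^1\times S^2$, dual to the homologically essential attaching knot $K\subset S^1\times S^2$.

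It then suffices to show that $J$ has Seifert genus $g:=g(J)=0$. If $g\ge 2$, let $\widehat F$ be the capped-off Seifert surface generating $H_2(Y_0(J))$, and consider the $\mathrm{spin}^c$ structure $\mathfrak s_{g-1}$ with $\langle c_1(\mathfrak s_{g-1}),[\widehat F]\rangle = 2(g-1)$. Ozsv\'{a}th--Szab\'{o} genus detection gives $\HFplus(Y_0(J),\mathfrak s_{g-1})\neq 0$, whereas $g-1\ge 1$ makes $\mathfrak s_{g-1}$ a non-torsion $\mathrm{spin}^c$ structure on $S^1\times S^2$, where $\HFplus$ vanishes; this is a contradiction. (This step uses only $Y_0(J)\cong S^1\times S^2$, not the $L$--space or $d$--invariant hypotheses.) Hence $g\le 1$.

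The main obstacle is excluding $g=1$. Now $\mathfrak s_{g-1}=\mathfrak s_0$ is torsion, so the nonvanishing above is automatic and cannot separate $g=1$ from $g=0$; moreover the correction terms alone do not suffice, since a figure-eight--type genus-one knot has a zero-surgery with the same $d$--invariants as $S^1\times S^2$. I would instead compute $\HFplus(Y_0(J),\mathfrak s_0)$ from the knot filtration via the mapping-cone (zero-surgery) formula, using $d(Y)=0$ and the $L$--space condition on $Y$ to pin down the two towers, and then show that $g=1$ (so that $\widehat{HFK}(Y,J,1)\neq 0$ in top Alexander grading) forces either $\HFred(Y_0(J),\mathfrak s_0)\neq 0$ or a grading on the towers incompatible with $\HFplus(S^1\times S^2,\mathfrak s_0)$, contradicting $Y_0(J)\cong S^1\times S^2$. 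This yields $g=0$, so $J$ bounds an embedded disk and $Y_0(J)\cong Y\#(S^1\times S^2)$; comparing with $S^1\times S^2$ and invoking irreducibility (uniqueness of prime decomposition) forces $Y\cong S^3$. Then $J$ is unknotted, the dual $2$--handle cancels against the $3$--handle, and $M(n)\cong B^4$.
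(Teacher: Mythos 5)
Your setup is sound and matches the paper's: the dual knot $J$ is the paper's $K'$, and $Y_0(J)\cong S^1\times S^2$ is exactly the starting point of both arguments. Your exclusion of $g\ge 2$ via genus detection is essentially correct, with the caveat that for a knot in a general homology sphere $Y$ (you cannot assume $Y=S^3$) the nonvanishing of $\HFplus(Y_0(J),\mathfrak s_{g-1})$ requires the complement of $J$ to be irreducible — a point the paper checks explicitly using primeness of $S^1\times S^2$, and which you should too, since it is also the hypothesis needed for the Gabai/Ni-type input behind genus detection. Your endgame (genus $0$ gives $Y_0(J)\cong Y\#(S^1\times S^2)$, primeness forces $Y\cong S^3$, and the dual $2$--handle cancels the $3$--handle) is fine; indeed the paper's closing remark concedes that once $J$ is unknotted this handle cancellation is the ``trivial'' route to $M(n)\cong B^4$, their symplectic Step 2 being a deliberate alternative.

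The genuine gap is the $g=1$ case, which you correctly identify as the crux but then only promise to handle: ``I would \ldots show that $g=1$ forces either $\HFred(Y_0(J),\mathfrak s_0)\neq 0$ or a grading incompatibility'' is a statement of the desired conclusion, not an argument, and there is good reason to believe no routine mapping-cone computation delivers it. What the hypothesis $Y_0(J)\cong S^1\times S^2$ actually gives you through the cone of $v_0+h_0\colon A_0^+\to B^+$ is: $V_0=H_0=0$ and vanishing reduced homology of $A_0^+$; equivalently, via the surgery exact triangle, both $\pm 1$--surgeries on $J$ are $L$--spaces, so $J$ and its mirror are both $L$--space knots. These formal conditions do not bound the genus: staircase complexes of $L$--space knots have trivial reduced homology in every $A_s^+$ at arbitrarily large genus, and for knots in a general $L$--space homology sphere there is no classification of $CFK^\infty$ complexes with which to rule out a genus-one complex satisfying your constraints while carrying nontrivial top Alexander grading. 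This is precisely the point where the paper imports heavy external theorems: fiberedness of $L$--space knots with irreducible complement (Boyer--Boileau--Cebanu--Walsh, generalizing Ni/Ghiggini), tightness of the supported contact structures (Conway, generalizing Hedden), and Honda--Kazez--Mati\'c right-veering applied to both $\phi$ and $\phi^{-1}$ to trivialize the monodromy (or, per Agol's remark, fiberedness plus the fact that $S^1\times S^2$ fibers only with sphere fibers). Any completion of your plan that I can see — e.g., squeezing $\mathrm{rk}\,\widehat{HFK}(Y,J,1)=1$ out of $\mathrm{rk}\,\widehat{HF}(Y_0(J),\mathfrak s_0)=2$ and invoking fiberedness detection — must import the same machinery, so as written your proposal is missing the central idea of the proof rather than offering an alternative to it.
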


\begin{figure}[htbp]
\begin{center}
\begin{overpic}[scale=1,tics=5]{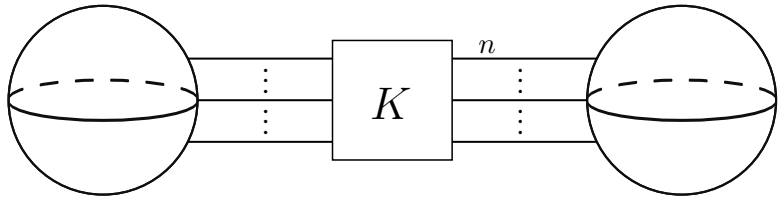}
\put(47,10){\LARGE $K$}
\put(33,13.3){$\vdots$}
\put(33,8){$\vdots$}
\put(66,13.3){$\vdots$}
\put(66,8){$\vdots$}
\put(61,18.5){$n$}
\end{overpic}
\caption{A Mazur-type manifold, with one $0$--handle, one $1$--handle, and one $2$--handle attached along $K$ with framing $n$.}
\label{mazur-manifold}
\end{center}
\end{figure}

It will follow from the proof that the attaching sphere of the $2$--handle of $M(n)$ is smoothly isotopic to $S^1 \times \{*\} \subset S^1 \times S^2$.

Recall that a \emph{Heegaard Floer homology $L$--space} (or simply \emph{$L$--space}) is a $3$--manifold whose Heegaard Floer homology is as simple as possible: $\HFred(M, \mathfrak s)$ vanishes for every $\mathfrak s \in \mathrm{Spin}^c(M)$.

\begin{remark}
Our result above provides further evidence to support Ozsv\'{a}th and Szab\'{o}'s conjecture in \cite[page 40]{OS:clay} that the full list of irreducible homology spheres that are $L$--spaces up to diffeomorphism is $S^3$ and the Poincar\'{e} homology sphere $\Sigma(2,3,5)$ with its two orientations. Theorem~\ref{main} shows that Ozsv\'{a}th and Szab\'{o}'s conjecture holds true for the class of three manifolds --- which is fairly large \cite{Akbulut:cork, AY, CH} --- that bound a contractible manifold of Mazur type.
\end{remark}

Given a handle decomposition of a Mazur-type manifold $W$, we can turn it upside down and consider it as being composed of a single handle of indices $2$, $3$, and $4$.  Attaching just the $2$--handle, we see that we have a surgery on $-\partial W$ that results in $S^1 \times S^2$.  We use this to give another proof of (a slightly more general version of) Property R, first proved by Gabai \cite{Gabai}.

\begin{theorem}\label{property r}
If $Y$ is an irreducible integer homology sphere $L$--space, and $0$--surgery on $K \subset Y$ gives $S^1 \times S^2$, then $Y$ is $S^3$ and $K$ is the unknot.
\end{theorem}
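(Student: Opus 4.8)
The plan is to run the construction described in the remark preceding the statement in reverse: from the surgery data I will build a Mazur-type manifold whose boundary is, up to orientation, $Y$, and then invoke Theorem~\ref{main}. Concretely, let $K^\ast \subset S^1\times S^2$ denote the dual knot of the $0$--surgery on $K$, that is, the core of the solid torus glued in to produce $Y_0(K)=S^1\times S^2$. I would form a $4$--manifold $W$ by attaching to $B^4$ a single $1$--handle $h^1$ --- yielding $S^1\times B^3$ with $\partial = S^1\times S^2$ --- and then a single $2$--handle $h^2$ along $K^\ast$, with the framing for which surgery on $K^\ast$ recovers $Y$. By surgery duality $\partial W \cong -Y$; the orientation is immaterial, since being an irreducible integer homology sphere $L$--space is preserved under reversal. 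Thus, once $W$ is shown to be of Mazur type, Theorem~\ref{main} gives $W\cong B^4$ and $\partial W\cong S^3$, whence $Y\cong S^3$.

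The first thing to verify is that $W$ really is of Mazur type: that $K^\ast$ meets the cocore $\{\ast\}\times S^2$ of $h^1$ algebraically once, and that the $2$--handle framing is an integer $n$. For the former, a Mayer--Vietoris computation for $Y_0(K)=(Y\setminus\nu K)\cup V$ shows that, because $Y$ is a homology sphere, $H_1(Y_0(K))=\Z$ is generated by the meridian $\mu$ of $K$, and that the core $K^\ast$ of $V$ is homologous to $\mu$; hence $[K^\ast]$ generates $H_1(S^1\times S^2)$ and $K^\ast$ meets $\{\ast\}\times S^2$ algebraically once, forcing $\pi_1(W)=1$ and $W$ contractible. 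For integrality, note that the complement of $K^\ast$ is itself a solid torus, and the Mayer--Vietoris sequence for the surgery recovering $Y$ forces the surgery slope to have longitudinal coefficient $\pm 1$ (otherwise $H_1(Y)$ would be nonzero torsion). Thus the framing is some $n\in\Z$ and $W=M(n)$, to which Theorem~\ref{main} applies.

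It then remains to identify $K$ with the unknot. Here I would use the observation recorded after Theorem~\ref{main}: once $W\cong B^4$, the attaching circle $K^\ast$ of the $2$--handle is smoothly isotopic to $S^1\times\{\ast\}\subset S^1\times S^2$. Since $S^1\times\{\ast\}$ meets the belt sphere $\{\ast\}\times S^2$ of $h^1$ in a single point geometrically, the pair $(h^1,h^2)$ is a canceling pair. The belt circle of $h^2$, which by surgery duality is exactly $K\subset\partial W=S^3$, is therefore the core of a genus-one Heegaard solid torus of $S^3$, i.e.\ the unknot. Equivalently, $K$ is the dual knot of a surgery on the standard $S^1\times\{\ast\}$ producing $S^3$, and every such dual knot is unknotted.

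I expect the main obstacle to lie not in any single hard estimate --- the analytic content is entirely absorbed into Theorem~\ref{main} --- but in the handle- and surgery-theoretic bookkeeping: verifying the homological primitivity and integrality of the dual framing so that the construction genuinely lands in the class $M(n)$, keeping careful track of orientations under the passage from $K$ to $K^\ast$ and back, and extracting the unknottedness of $K$ (rather than merely $Y\cong S^3$) from the isotopy class of the attaching circle.
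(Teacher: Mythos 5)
Your construction is essentially the paper's proof: the paper builds the same manifold $W$ by turning the trace of the $0$--surgery upside down (so that the $2$--handle is attached along the dual knot, your $K^\ast$) and capping the $S^1 \times S^2$ end with $S^1 \times B^3$, which is precisely your $B^4 \cup h^1 \cup h^2$; your Mayer--Vietoris verifications that $[K^\ast]$ is primitive and that the dual framing is integral are correct and simply make explicit what the paper compresses into ``notice that $W$ is a Mazur-type manifold.'' Two points deserve attention. First, your parenthetical claim that ``the complement of $K^\ast$ is itself a solid torus'' is false as stated, and in fact presupposes the theorem: that complement is $Y \setminus \nu K$, which is an honest solid torus only if $K$ is the unknot in $S^3$. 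It is merely a \emph{homology} solid torus, with $H_1 \cong \Z$ generated by the meridian of $K$ (since $Y$ is a homology sphere), and that is all your Mayer--Vietoris argument actually uses --- so the slip is harmless to the logic, but the sentence should be corrected. Second, for the unknottedness of $K$ you take a more roundabout route than the paper: the paper observes that $K$ (its $K'$) is isotopic to the boundary of the co-core of the $2$--handle of $W$, and Step~1 of the proof of Theorem~\ref{main} shows directly that this knot is the unknot; you instead invoke the paper's remark that the attaching circle is isotopic to $S^1 \times \{\ast\}$ and then argue via handle cancellation and Heegaard cores. Your argument is valid granting that remark (though note the remark follows from Step~1, not from $W \cong B^4$ as your phrasing suggests), but in the paper's logic the remark is itself deduced from the unknottedness of the co-core boundary, so citing Step~1 directly is both shorter and avoids any appearance of circularity.
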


We note that our proof via Heegaard Floer homology and contact geometry is of a different flavor than Gabai's original proof, although some of the machinery in the background is similar to the machinery involved in existing proofs (by Gabai \cite{Gabai}, Gordon and Luecke \cite{GL:knotcomplement}, and Scharlemann \cite{Scharlemann}).  Our methods do not require assuming that $Y$ is $S^3$ to start off; however, the other proofs actually prove much more general results.

\begin{remark} \label{S4PC}
Recall that a well-known equivalent phrasing of the smooth $4$--dimensional Poincar\'{e} Conjecture is that every contractible manifold with boundary $S^3$ is diffeomorphic to $B^4$ (see \cite[Remark~4.8]{Yasui} and related discussion after Question~1.2 in \cite{MT:pseudoconvex}). Theorem~\ref{main} touches on this, in that it shows that whenever $S^3$ bounds a contractible manifold $M$ of Mazur-type, then $M$ is diffeomorphic to $B^4$. However, our methods do not generalize to the case of contractible manifolds with more than a single handle of index $1$ and $2$: in particular, we rely on a result \cite[Proposition~1.2]{AKa} of Akbulut and Karakurt about Mazur-type manifolds (see below for more details on this result and its proof), and its natural generalization to the more general setting is no longer true.  Indeed, if the proof of Theorem~\ref{main} generalized, then the boundary of the co-core of each $2$--handle would have to be an unknot. However, there are examples where this is not the case, see for example \cite[Section~6]{GST}.
\end{remark}

\medskip
{\bf Acknowledgements:} We thank both Jeffrey Meier and Alexander Zupan for pointing us toward Property R, and the former also for pointing us to the examples in Remark~\ref{S4PC}.  We also thank Ian Agol, John Etnyre, and Tom Mark for helpful comments. The first author was partially supported by NSF grant DMS-1344991.

\section{Proofs of Results}

\begin{proof}[Proof of Theorem~\ref{main}]
We split our proof into two steps: we first show that the boundary is $S^3$, and then we show that the $4$--manifold itself is $B^4$.

{\bf Step 1:} Assume that $Y = \partial M(n)$ is an $L$--space for some $n$. We want to show that $Y$ is diffeomorphic to $S^3$. Let $K'$ denote a meridian of $K$ (see Figure~\ref{meridian}). Thought of as a knot in $Y$, $K'$ is isotopic to the boundary of the co-core of the $2$--handle. Note that $\pm 1$--surgery on $K' \subset Y$ is $\partial M(n\mp 1)$.

\begin{figure}[htbp]
\begin{center}
\begin{overpic}[scale=1.2,tics=5]{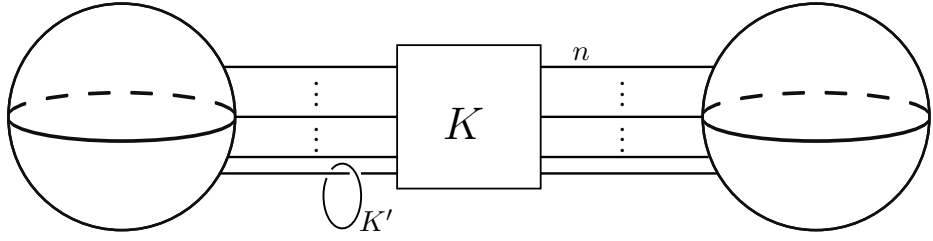}
\put(47,10){\LARGE $K$}
\put(33,13.5){$\vdots$}
\put(33,8.6){$\vdots$}
\put(66,13.5){$\vdots$}
\put(66,8.6){$\vdots$}
\put(61,18.5){$n$}
\put(38,0){$K'$}
\end{overpic}
\caption{The knot $K' \subset Y$.}
\label{meridian}
\end{center}
\end{figure}

We start by recalling that the Heegaard Floer homology of $\partial M(n)$ is independent of the framing $n$, up to a grading shift. As we mentioned in the introduction, this was proved by Akbulut and Karakurt in \cite[Proposition~1.2]{AKa}. The idea is as follows: since $M(n)$ is contractible, its boundary is an integral homology sphere with Heegaard Floer correction term $d = 0$, and hence
\[ \HFplus(\partial M(n)) \cong \mathcal{T}_{(0)}^+\oplus \HFred(\partial M(n)), \]
where $\mathcal T_{(k)}^+ \cong \mathbb F[U, U^{-1}]/\left(U\cdot \mathbb F[U]\right)$, with minimal element of grading $k$. Therefore, they just need to show that $\HFred(\partial M(n))$ is independent of $n$. This is achieved by applying the Heegaard Floer surgery exact triangle. Namely, $-1$-- and $0$--surgeries along the knot $K'$ produces $M(n+1)$ and $S^1\times S^2$, respectively, and this fits into the following surgery exact triangle:
\[ \cdots \xrightarrow{f_3} \HFplus_k(\partial M(n+1)) \xrightarrow{f_1} \HFplus_{k-\frac{1}{2}}(S^1\times S^2, \mathfrak{t}_{0}) \xrightarrow{f_2} \HFplus_{k-1}(\partial M(n)) \xrightarrow{f_3} \cdots \]
Here, $\HFplus(S^1\times S^2, \mathfrak{t}_{0})\cong \mathcal{T}^+_{(\frac{1}{2})}\oplus \mathcal{T}^+_{(-\frac{1}{2})}$, and the homomorphisms $f_1$ and $f_2$ are homogenous of degree $-\frac{1}{2}$. Using these facts, one can quickly determine that $f_3$ induces an isomorphism between $\HFred(\partial M(n))$ and $\HFred(\partial M(n+1))$ (see \cite[Proposition~1.2]{AKa} for more details). Applying Akbulut and Karakurt's result shows that if $\partial M(n)$ is an $L$--space for one value of $n$, then it is an $L$--space for all values of $n$. In particular, we know that $\pm 1$--surgery on $K'$ is an $L$--space.

We claim that the complement of $K'$ in $Y$ is irreducible. This can be seen as follows: if $Y$ is $S^3$, then our claim is true. If not, then since $Y$ is itself irreducible, if $K'$ has reducible complement, then it must be contained in a $3$--ball. If this is the case, then the result of $0$--surgery on $K'$ would be the connected sum $Y \# Y'$, for some $3$--manifold $Y'$.  However, we know that the result of $0$--surgery on $K'$ is actually $S^1 \times S^2$, since $K'$ is the meridian of $K \subset S^1 \times S^2$.  Since $S^1 \times S^2$ is prime, it follows that $K'$ must have an irreducible complement.

Since $K'$ is an $L$--space knot with irreducible complement, then by \cite[Theorem~6.5]{BBCW} (see also \cite[Page 1, paragraph 2]{LWa}), it follows that $K'$ must be fibered (this was originally proved for knots in $S^3$ in \cite{Ni, Ghiggini:fibred} and reproved in \cite{Juhasz}). On the other hand, by\cite[Corollary~1.4]{Conway:admissible}, fibered $L$--space knots support tight contact structures (originally proved for knots in $S^3$ in \cite{Hedden:positivity}).  This is proved by calculating the Heegaard Floer contact invariant of a certain contact structure on $-\left(Y_n(K')\right)$, where $n \in \Z$ is large. If $K'$ supports a contact structure with vanishing Heegaard Floer contact invariant, then one shows that the reduced Heegaard Floer contact invariant for the contact structure $-\left(Y_n(K')\right)$ is non-vanishing, which cannot happen if it is an $L$--space.

However, both $K'$ in $Y$ and $-K'$ in $-Y$ (its mirror) are fibered $L$--space knots, since both $1$-- and $-1$--surgery on $K'$ yields an $L$--space, and so they both support tight contact structures.  Let $\phi$ be the monodromy for the open book induced by $K'$; then $\phi^{-1}$ is the monodromy induced by $-K'$.  Since both $K'$ and $-K'$ support tight contact structures, then both $\phi$ and $\phi^{-1}$ must be right-veering, by \cite[Theorem~$1.1$]{HKM:right1}. However, this implies that $\phi$ must be trivial. Since $Y$ is a homology sphere, this implies that the page of the open book is a disk, that $K'$ is the unknot, and that $Y$ is diffeomorphic to $S^3$.

\begin{remark} We thank Ian Agol for remarking to us that once we know that $K'$ is fibered, it is immediate that the result of $0$--surgery on $K'$ is a surface bundle over $S^1$.  Since this surface bundle is diffeomorphic to $S^1 \times S^2$, which only fibers in one way, we can conclude that $K'$ is a genus-$0$ fibered knot, and hence is the unknot in $S^3$. \end{remark}

{\bf Step 2:} We continue to assume that $\partial M(n)$ is an $L$--space --- and hence $S^3$ --- and we now wish to show that $M(n)$ is diffeomorphic to $B^4$. First recall that if $M(n)$ admits a Stein structure in which $\partial M(n)$ is a convex level-set of the plurisubharmonic function, then $M(n)$ is a Stein filling --- and hence a strong symplectic filling --- of the tight contact structure on $S^3$.  By a famous result of Gromov and McDuff \cite{Gromov, McDuff:rationalruled}, any minimal such strong symplectic filling is diffeomorphic to $B^4$.

Let $k$ be a positive integer, such that $M(n-k)$ admits a Stein structure.  To find such a $k$, let $L \subset (S^1 \times S^2, \xi_{\rm{std}})$ be a Legendrian realization of $K$, the attaching sphere of the $2$--handle.  We can now measure $tb(L)$ (see \cite[Section~2]{Gompf} for details and conventions), such that we can build a Stein structure on $M(tb(L)-1)$ by extending the Stein structure on $S^1 \times B^3$ over a Stein $2$--handle attached to $L$ with smooth framing $tb(L)-1$.  Now, we can choose any $k$ such that $n-k \leq tb(L) - 1$.

\begin{figure}[htbp]
\begin{center}
\begin{overpic}[scale=1,tics=5]{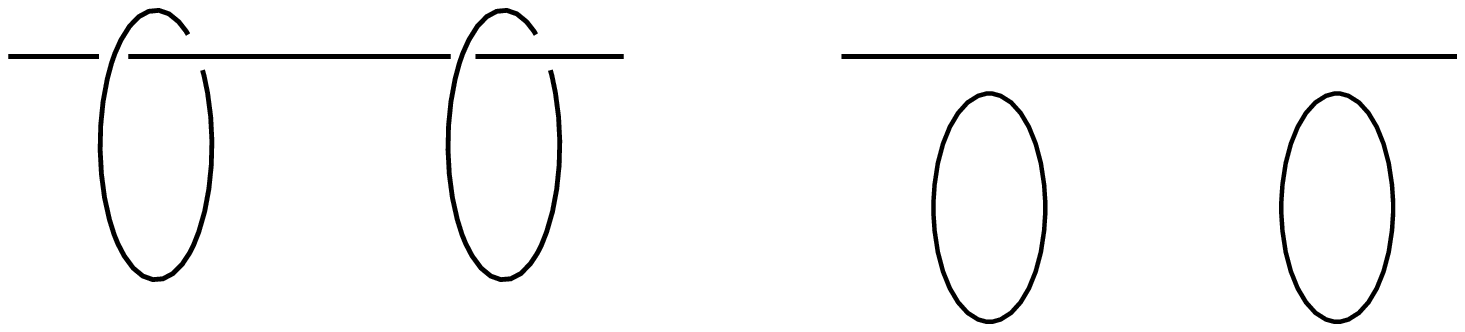}
\put(48,13){\LARGE $\xrsquigarrow{\cong}$}
\put(-3.5,17.8){$\cdots$}
\put(42.7,17.8){$\cdots$}
\put(53.8,17.8){$\cdots$}
\put(100.3,17.8){$\cdots$}
\put(18.5,19){$n-k$}
\put(77,19){$n$}
\put(19.5,10.5){\LARGE $\cdots$}
\put(14.3,8){$-1$}
\put(38.3,8){$-1$}
\put(71.8,5){$-1$}
\put(77,8){\LARGE $\cdots$}
\put(96,5){$-1$}
\end{overpic}
\caption{}
\label{blow-down}
\end{center}
\end{figure}

Since $M(n-k)$ is a Stein filling of $S^3$, we know that it is diffeomorphic to $B^4$.  Since the knot $K'$ is the unknot, the result of attaching $2$--handles with framing $-1$ along $k$ copies of $K' \subset S^3 = \partial M(n-k)$ is a $k$--fold blow-up of $B^4$ (see the left-hand side of Figure~\ref{blow-down}), which admits a symplectic structure with strongly convex boundary (see \cite[Section~7.1]{MDS:intro}).  This $4$--manifold is also diffeomorphic to the right-hand side of Figure~\ref{blow-down}, and by blowing down, we find that $M(n)$ itself admits a symplectic structure with strongly convex boundary (see again \cite[Section~7.1]{MDS:intro}).  Since $M(n)$ is minimal, the aforementioned result of Gromov and McDuff implies that $M(n)$ is diffeomorphic to $B^4$.
\end{proof}

\begin{proof}[Proof of Theorem~\ref{property r}]
Let $Y$ be an irreducible integer homology sphere $L$--space, and let $K' \subset Y$ be a knot such that $0$--surgery on $K'$ gives $S^1 \times S^2$.  Consider the $4$--dimensional cobordism from $Y$ to $S^1 \times S^2$ that is the trace of this surgery. Turn this cobordism upside down, to see it as a cobordism from $S^1 \times S^2$ to $-Y$, and glue on $S^1 \times B^3$ by a diffeomorphism $S^1 \times S^2 \cong \partial(S^1 \times B^3)$.  Call the resulting $4$--manifold $W$, and notice that $W$ is a Mazur-type manifold, and $K'$ is isotopic to boundary of the co-core of the $2$--handle in $W$.  By Theorem~\ref{main} and its proof, we know that $-Y \cong S^3$ (and hence $Y \cong S^3$ as well), and also that $K'$ is the unknot.
\end{proof}

\begin{remark} Given Property R, showing that any Mazur-type manifold with boundary $S^3$ is actually diffeomorphic to $B^4$ (Step 2 in our proof of Theorem~\ref{main}) is trivial: turning it upside down, it must consist of a $2$--handle attached along an unknot and a canceling $3$--handle, followed by a capping $4$--handle, which gives $B^4$.  However, the symplectic geometric proof presents an unusual take on this problem that we find interesting. \end{remark}

\bibliography{references}
\bibliographystyle{amsplain}

\end{document}